\documentclass[12pt,leqno,a4paper]{amsart}
\usepackage{amssymb,enumerate}
\usepackage{enumitem}

\textwidth160mm
\oddsidemargin5mm
\evensidemargin5mm

\newcommand{\FF}{{\mathbb{F}}}

\newcommand{\bC}{{\mathbf{C}}}
\newcommand{\bH}{{\mathbf{H}}}
\newcommand{\bN}{{\mathbf{N}}}
\newcommand{\bT}{{\mathbf{T}}}

\newcommand{\fA}{{\mathfrak{A}}}
\newcommand{\fS}{{\mathfrak{S}}}

\newcommand{\Irr}{{\operatorname{Irr}}}
\newcommand{\Out}{{\operatorname{Out}}}
\newcommand{\OO}{{\operatorname{O}}}
\newcommand{\PGL}{{\operatorname{PGL}}}
\newcommand{\PSL}{{\operatorname{L}}}
\newcommand{\PSU}{{\operatorname{U}}}
\newcommand{\PGU}{{\operatorname{PGU}}}
\newcommand{\Syl}{{\operatorname{Syl}}}

\newcommand{\tw}[1]{{}^#1\!}

\newtheorem{thm}{Theorem}[section]
\newtheorem{lem}[thm]{Lemma}
\newtheorem{prop}[thm]{Proposition}

\newtheorem*{thmA}{Theorem A}

\theoremstyle{remark}

\newtheorem{exmp}[thm]{Example}

\begin{document}

\title[Defect zero characters predicted by local structure]{Defect zero characters\\ predicted by local structure}

\date{\today}

\author{Gunter Malle}
\address{FB Mathematik, TU Kaiserslautern, Postfach 3049,
        67653 Kaisers\-lautern, Germany.}
\email{malle@mathematik.uni-kl.de}
\author{Gabriel Navarro}
\address{Departament d'\`Algebra, Universitat de Val\`encia, 46100 Burjassot,
        Val\`encia, Spain.}
\email{gabriel@uv.es}
\author{Geoffrey R. Robinson}
\address{Institute of Mathematics, University of Aberdeen, Aberdeen AB24 3UE, Scotland, United Kingdom}
\email{g.r.robinson@abdn.ac.uk}

\thanks{The first author gratefully acknowledges financial support by ERC
Advanced Grant 291512. The second author is partially supported by the
Spanish Ministerio de Educaci\'on y Ciencia Proyectos MTM2013-40464-P and
Prometeo II/Generalitat Valenciana.}

\keywords{Defect zero characters, Brauer-Fowler}

\subjclass[2010]{20C20, 20C15}

\begin{abstract}
Let $G$ be a finite group and let $p$ be a prime. Assume that there exists
a prime $q$ dividing $|G|$ which does not divide the order of any $p$-local
subgroup of $G$. If $G$ is $p$-solvable or $q$ divides $p-1$, then $G$ has
a $p$-block of defect zero. The case $q=2$ is a well-known result by Brauer
and Fowler.
\end{abstract}

\maketitle


\section{Introduction}   \label{sec:intro}

Let $G$ be a finite group and let $p$ be a prime.
A $p$-defect zero character of $G$ is an irreducible complex character
$\chi\in\Irr(G)$ whose degree has $p$-part $\chi(1)_p=|G|_p$. For several
well-known reasons, $p$-defect zero characters play an important role in
Representation Theory, and they are the subject of key questions in this
field by Richard Brauer (as Problem 19 of \cite{Br}, solved by the third author
in \cite{R}) or Walter Feit (Problem VI, Chapter IV of \cite{F}).

\medskip

This note is a small contribution to Feit's Problem VI, in which he asks about
necessary and sufficient conditions for the existence of characters of defect
zero. There are too many results in this area, by R.~Brauer and K.~A.~Fowler,
N.~Ito, G.~R.~Robinson, Y.~Tsushima, T.~Wada and others, to be listed here.
It is elementary to show, however, that a necessary condition for the existence
of a $p$-defect zero character in a finite group $G$ is that the group has no
non-trivial normal $p$-subgroups.
In this note, we consider local subgroups (normalisers of non-trivial
$p$-subgroups), and some special primes to give a sufficient condition.

\begin{thmA}
 Let $G$ be a finite group and let $p$ and $q$ be different primes dividing
 $|G|$. If $G$ is not $p$-solvable, assume that $q|(p-1)$.
 If $q$ does not divide $|\bN_G(R)|$ for every $p$-subgroup $R>1$ of $G$, then
 $G$ has a $p$-defect zero character.
\end{thmA}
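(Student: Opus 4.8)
The plan is to first translate the hypothesis into a usable form. If an element $y$ of order $q$ normalised a nontrivial $p$-subgroup $R$, then $y\in\bN_G(R)$ and $q\mid|\bN_G(R)|$; so the hypothesis says precisely that no nontrivial $q$-element of $G$ normalises a nontrivial $p$-subgroup. I would record two immediate consequences. Taking $R=\OO_p(G)$ forces $\OO_p(G)=1$ (the classical necessary condition). And if $x$ has order $q$, then $\bC_G(x)$ contains no nontrivial $p$-element, for such an element would be normalised by $x$; hence $\bC_G(x)$ is a $p'$-group. I would also note that these conditions survive in the sections that arise below, so an induction on $|G|$ is available. Throughout write $P\in\Syl_p(G)$ and $Q\in\Syl_q(G)$.

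The engine is the elementary observation that it suffices to produce a normal $p'$-subgroup $N\trianglelefteq G$ and a character $\theta\in\Irr(N)$ whose inertia group $T=I_G(\theta)$ has order prime to $p$: by the Clifford correspondence any $\chi\in\Irr(G\mid\theta)$ satisfies $\chi(1)=[G:T]\psi(1)$ with $\psi(1)\mid|T|$, whence $\chi(1)_p=[G:T]_p\,\psi(1)_p=|G|_p$ and $\chi$ has $p$-defect zero. For $G$ $p$-solvable I would realise this with $N=\OO_{p'}(G)$. Since $\OO_p(G)=1$, the quotient $G/N$ is $p$-solvable with $\OO_{p'}(G/N)=1$, hence $\OO_p(G/N)\neq1$, giving $M\trianglelefteq G$ with $N\le M$ and $M/N$ a nontrivial $p$-group. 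If $q\nmid|N|$ then $Q$ acts coprimely on $M$ and normalises a nontrivial Sylow $p$-subgroup of $M$, contradicting the first paragraph; so $q\mid|N|$. Choosing $x\in N$ of order $q$, the orbit–stabiliser identity $\mathrm{Stab}_P(x^N)\cong \bC_{NP}(x)/\bC_N(x)$ exhibits $\mathrm{Stab}_P(x^N)$ as a $p$-group that is also an epimorphic image of the $p'$-group $\bC_{NP}(x)\le\bC_G(x)$, hence trivial. Thus $P$ has a regular orbit on the conjugacy classes of $N$, and by Brauer's permutation lemma (coprime action of $P$ on $N$) a regular orbit on $\Irr(N)$. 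Any $\theta$ in such an orbit has $P\cap T=1$, so $T$ is a $p'$-group and we are finished.

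When $G$ is not $p$-solvable one has the extra hypothesis $q\mid(p-1)$, and the difficulty is that the normal subgroup carrying the $p$-part can no longer be a $p'$-group. Here I would pass to the generalised Fitting subgroup: modulo $\OO_{p'}(G)$ the layer is a product of quasisimple components permuted by $G$, each divisible by $p$ (a $p'$-component would lie in $\OO_{p'}(G)$). The goal becomes to locate a $p$-defect zero character $\theta$ of the layer, $G$-invariant or with inertia group contributing the correct $p$-part, and to lift it to a defect zero character of $G$ by character-triple/Fong–Reynolds arguments. The role of $q\mid(p-1)$ is to constrain how $q$-elements act on the simple sections and on their torus and Sylow normalisers; combined with the condition that no $q$-element lies in any $p$-local subgroup, this should pin down the possible components and their $p$-local structure.

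I expect this last case to be the main obstacle, and to rest on the classification of finite simple groups. Concretely, one must show that a (quasi)simple group $S$ with $p\mid|S|$, admitting a prime $q\mid(p-1)$ no power of which normalises a nontrivial $p$-subgroup of the relevant section, has a $p$-block of defect zero, and that this block persists on the way up to $G$. Verifying the existence of such a block across all simple groups (in particular ruling out the known exceptional cases lacking defect zero blocks for small $p$), and controlling the extension through $G/\mathrm{layer}$ in the presence of several components and a nontrivial $\OO_{p'}(G)$, is where the genuine work and the appeal to detailed structural knowledge of the simple groups will be concentrated.
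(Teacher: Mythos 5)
There is a genuine gap at the end of your $p$-solvable argument, in the step ``Any $\theta$ in such an orbit has $P\cap T=1$, so $T$ is a $p'$-group.'' This inference is not valid: $T=I_G(\theta)$ contains $N$ and may well contain nontrivial $p$-elements that simply lie in Sylow $p$-subgroups of $G$ other than your fixed $P$. What Clifford theory actually requires is $|T|_p=1$, and since $|T|_p=\max_{g\in G}|P\cap T^g|=\max_{g\in G}|I_P(\theta^g)|$, you would need \emph{every} $G$-conjugate of $\theta$ to lie in a regular $P$-orbit; your construction only guarantees that $\theta$ itself does, and nothing prevents other members of its $G$-orbit from being fixed by $p$-elements. (A secondary, fixable, issue: Brauer's permutation lemma compares fixed points of \emph{cyclic} groups, but your $P$ need not be cyclic; to transfer a regular orbit from classes to characters you need the stronger fact, via the Glauberman correspondence and Burnside marks, that for a coprime action of a solvable group the two permutation actions are isomorphic. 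Also note that your regular-orbit argument transfers the wrong invariant anyway, since the class-to-character bijection is not $G$-equivariant, so the $G$-stable family of classes of order-$q$ elements does not yield a $G$-stable family of characters.) The paper avoids all of this: in its induction the minimal normal subgroup $M$ is a $p'$-group containing an element $x$ of order $q$, the hypothesis forces $\bC_G(x)$ to be a $p'$-group, and then Tsushima's theorem \cite[Thm.~1]{T} (an element of a normal $p'$-subgroup with $p'$-centraliser forces a $p$-block of defect zero) finishes at once. If you want to keep your route, you should replace the last step by exactly this appeal to Tsushima.

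The non-$p$-solvable half of your proposal is a plan rather than a proof: passing to the generalised Fitting subgroup, ``locating'' a defect zero character of the layer, and ``controlling the extension'' are precisely where the content lies, and none of it is carried out. The paper's actual reduction is sharper and worth comparing: for a minimal normal subgroup $M$ with $p\mid|M|$, the Frattini argument plus the hypothesis give $q\nmid|G/M|$, hence $q\mid|M|$; the absence of elements of order $pq$ (an immediate consequence of the hypothesis) forces $M$ to be \emph{simple}, so $G$ is almost simple; a coprime-generation argument ($Q=\langle\bC_Q(x)\mid 1\ne x\in A\rangle$ for $A$ elementary abelian of order $p^2$) shows $G/M$ has cyclic Sylow $p$-subgroups, so $G/G'$ is a cyclic $p$-group. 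Only then does the CFSG enter, through a dichotomy theorem for such almost simple groups: either $G$ has a $p$-defect zero character, or for every $r\mid(p-1)$ some $U\cong C_p$ has $r\mid|\bN_G(U)|$ (proved using Michler's theorem and regular semisimple elements in suitable maximal tori / Deligne--Lusztig characters); the hypothesis with $r=q$ excludes the second horn. So while you correctly anticipated that the classification is needed and roughly where, your proposal neither performs the reduction to almost simple groups nor formulates, let alone proves, the dichotomy that makes the hypothesis $q\mid(p-1)$ do its work.
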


We point out that the proof of Theorem A, if $G$ is not $p$-solvable, uses
the Classification of Finite Simple Groups. In the case where $q=2$, however,
Theorem~A follows from a classical result by Brauer and Fowler \cite[(5F)]{BF}.
In fact, as in the Brauer--Fowler theorem, we  only need to consider subgroups
$R$ of $G$ of order $p$, and therefore our main result can be regarded as a
version of Brauer-Fowler for odd primes.

As a by-product of our proof in Theorem~\ref{thm:no defect zero} we also
classify for $p\ge5$ the almost simple groups $G$ with $G/G'$ cyclic of prime
power order $p^a$ having no $p$-defect zero characters. The analogous
classification in the case when $p\le3$ seems somewhat more involved.

\section{Main Results}

As we have pointed out in the introduction, for primes $q$ dividing $p-1$,
the hypothesis that $q$ does not divide $|\bN_G(R)|$ for every $p$-subgroup
$R>1$ of $G$ is equivalent to the condition that $q$ does not divide
$|\bN_G(R)|$ for every subgroup $R$ of $G$ of order $p$.
This is a consequence of the following.

\begin{lem}   \label{same}
 Suppose that a non-trivial $q$-group $Q$ acts by automorphisms on a
 non-trivial $p$-group $P$, where $p$ and $q$ are primes such that $q$
 divides $p-1$. Then there exists $x \in P$ of order $p$ such that
  $|\bN_Q(\langle x\rangle)|>1$.
\end{lem}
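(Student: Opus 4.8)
The plan is to reduce at once to the case where $P$ is elementary abelian and central, and then to exploit the hypothesis $q\mid(p-1)$ directly to produce an eigenvector for a suitable element of $Q$.

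First I would replace $P$ by $V:=\Omega_1(Z(P))$, the subgroup of elements of order dividing $p$ in the centre of $P$. Since $P$ is a non-trivial $p$-group we have $Z(P)>1$ and hence $V>1$; moreover $V$ is characteristic in $P$ and therefore invariant under the action of $Q$, so $Q$ acts by automorphisms on $V$. As $V$ is elementary abelian, every non-identity element of $V$ has order $p$, and for $1\ne x\in V$ the subgroup $\langle x\rangle$ has order $p$ with $\bN_Q(\langle x\rangle)$ equal to its stabiliser in $Q$ (computed for the action on $P$, since $\langle x\rangle\le V$ and $V$ is $Q$-invariant). Thus it suffices to find $1\ne x\in V$ whose line $\langle x\rangle$ is fixed by some non-identity element of $Q$.

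Next I regard $V$ as an $\FF_p$-vector space, so that the action of $Q$ becomes a homomorphism $Q\to\operatorname{GL}(V)$, and I choose any element $g\in Q$ of order $q$, which exists because $Q$ is a non-trivial $q$-group. The decisive point is that $q\mid(p-1)$ means $\FF_p^\times$ contains an element of order $q$, i.e.\ $\FF_p$ contains all $q$-th roots of unity; since also $q\ne p$, the polynomial $X^q-1$ splits into distinct linear factors over $\FF_p$. The minimal polynomial of the linear map induced by $g$ on $V$ divides $X^q-1$ (as $g^q=1$), hence it has a root $\lambda\in\FF_p$, so $g$ admits a non-zero eigenvector $x\in V$ with $gx=\lambda x$. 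Then $g$ maps $\langle x\rangle$ onto itself, so $g\in\bN_Q(\langle x\rangle)$, and since $g\ne1$ this yields $|\bN_Q(\langle x\rangle)|>1$, as required.

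The whole argument turns on the observation in the previous paragraph, and that is where I expect the real content to lie: the hypothesis $q\mid(p-1)$ is exactly what forces the order-$q$ element $g$ to have an eigenvalue in the ground field $\FF_p$ itself, rather than merely in an extension. It is worth noting that the more naive approach—letting $Q$ act on the set of lines of $V$ and invoking a freeness-of-action count—does not work cleanly, since the number of lines is $1+p+\cdots+p^{\dim V-1}\equiv\dim V\pmod q$, which is divisible by $q$ whenever $q\mid\dim V$, leaving a gap in that case. The eigenvector argument avoids this difficulty entirely.
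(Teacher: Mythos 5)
Your proof is correct, but it takes a genuinely different route from the paper. The paper argues by induction on $|P|$: it reduces to $|Q|=q$ and to $P$ a faithful irreducible $Q$-module, then invokes the structure theorem for such modules (citing Manz--Wolf) to identify $P$ with $\FF_{p^m}$ with $Q\subseteq\FF_{p^m}^\times$ acting by multiplication, and finally uses that the unique subgroup of order $q$ in the cyclic group $\FF_{p^m}^\times$ already lies in $\FF_p^\times$ because $q\mid(p-1)$. You instead bypass both the induction and the module-theoretic classification: passing to $\Omega_1(Z(P))$, which is non-trivial, characteristic (hence $Q$-invariant) and elementary abelian, reduces everything to linear algebra, and then the hypothesis $q\mid(p-1)$ enters through the fact that $X^q-1$ splits into distinct linear factors over $\FF_p$, so any order-$q$ element of $Q$ has an eigenvector in $V$ with eigenvalue in $\FF_p$. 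Both arguments pivot on the same arithmetic fact---that the $q$-th roots of unity live in the prime field---but yours is more self-contained (no induction, no appeal to an external structure theorem) and arguably isolates the mechanism more transparently; the paper's reduction to the irreducible faithful case is the standard coprime-action template and generalises more readily to situations where one genuinely needs the module structure. One small point worth noting: your argument does not even require the action of $g$ on $V$ to be non-trivial, since a trivial action makes every non-zero vector an eigenvector, so no faithfulness reduction is needed at all. Your closing remark about the failure of the naive orbit-counting approach on the projective line set (the count $1+p+\cdots+p^{\dim V-1}\equiv\dim V\pmod q$ being useless when $q\mid\dim V$) is also accurate and explains why the eigenvalue argument, rather than a counting argument, is the right tool.
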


\begin{proof}
We argue by induction on $|P|$. We may assume that $|Q|=q$. If $1<N <P$ is
$Q$-invariant, then we are done by induction. So we may assume that $P$ is an
irreducible $Q$-module. Also, we may assume that $P$ is faithful.
But in this case, it is well-known that $P$ can be identified with
$V=\FF_{p^m}$ and that $Q \subseteq \FF_{p^m}^\times$ acts by multiplications.
(See for instance  of \cite[Thm.~2.1]{MW}.) Let $Q=\langle y\rangle$.
Hence, if $0\ne v \in V$ then $v^y=\lambda v$ for some
$\lambda \in \FF_{p^m}^\times$. Now $\FF_{p^m}^\times$ is cyclic, and
it has a unique subgroup of order $q$ that lies in $\FF_p^\times$, using that
$q$ divides $p-1$. We deduce that $\lambda \in \FF_p^\times$, and
$v^y \in \langle v\rangle$. Thus $|\bN_Q(\langle v\rangle)|>1$.
\end{proof}

Of course, Lemma~\ref{same}, is no longer true without the hypothesis
of $q$ dividing $p-1$, as the alternating group $\fA_4$ shows.

\medskip

Now, assuming the result of Theorem~\ref{thm:almost} on almost simple groups
whose proof we defer to the next section, we proceed to prove our Theorem A
which we restate for the reader's convenience:

\begin{thm}
 Let $G$ be a finite group and let $p$ and $q$ be different primes dividing
 $|G|$. If $G$ is not $p$-solvable, assume that $q|(p-1)$.
 If $q$ does not divide $|\bN_G(R)|$ for every $p$-subgroup $R>1$ of $G$, then
 $G$ has a $p$-defect zero character.
\end{thm}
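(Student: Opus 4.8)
The plan is to induct on $|G|$, reducing the statement to the almost simple case handled by Theorem~\ref{thm:almost} (and, for $q=2$, to the Brauer--Fowler theorem \cite[(5F)]{BF}), with the $p$-solvable case treated separately and without the Classification. First observe that the hypothesis immediately forces $O_p(G)=1$: if $R=O_p(G)>1$ then $\bN_G(R)=G$ has order divisible by $q$, a contradiction. Next, whenever $q\mid(p-1)$, Lemma~\ref{same} lets us replace the condition ``$q\nmid|\bN_G(R)|$ for every $p$-subgroup $R>1$'' by the formally weaker ``$q\nmid|\bN_G(\langle x\rangle)|$ for every $x\in G$ of order $p$'': if some non-trivial $p$-subgroup $R$ had $q\mid|\bN_G(R)|$, a subgroup $Q$ of order $q$ of $\bN_G(R)$ would act on $R$ and Lemma~\ref{same} would produce $x\in R$ of order $p$ with $|\bN_Q(\langle x\rangle)|>1$, so that $q\mid|\bN_G(\langle x\rangle)|$. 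Thus in the non-$p$-solvable branch we work only with subgroups of order $p$.

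For the $p$-solvable case (where we do not assume $q\mid(p-1)$) I would argue directly. Here $F:=O_{p'}(G)>1$ with $\bC_G(F)\le F$, and the key point is to produce a character $\theta\in\Irr(F)$ whose inertia group $\bT:=\mathbf{I}_G(\theta)$ is a $p'$-group. Once such a $\theta$ is found, a character-triple (Gallagher) argument, using that $\theta$ has defect zero in the $p'$-group $F$, yields $\psi\in\Irr(\bT\mid\theta)$, and $\mathrm{Ind}_{\bT}^G\psi$ is then irreducible of $p$-part $[G:\bT]_p\,\psi(1)_p=|G|_p$, hence of $p$-defect zero. To find such a $\theta$ I would exploit the prime $q$. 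Let $R$ be a Sylow $p$-subgroup of the preimage of $O_p(G/F)$, a non-trivial $p$-subgroup; the Frattini argument gives $G=F\,\bN_G(R)$, so $|\bN_G(R)|=[G:F]\,|\bC_F(R)|$, and the hypothesis $q\nmid|\bN_G(R)|$ forces $q\mid|F|$ while $q\nmid|\bC_F(R)|$. Choosing an $R$-invariant Sylow $q$-subgroup $F_q$ of $F$, we get $\bC_{F_q}(R)=1$, i.e.\ $R$ acts on $F_q\neq1$ as a Frobenius complement; I would leverage this fixed-point-free action, through a regular-orbit argument on $\Irr(F)$, to build the desired $\theta$.

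In the non-$p$-solvable case I would run the standard Clifford-theoretic reductions. Using that the constituents below a $p$-defect zero character of a normal subgroup again have $p$-defect zero (Knörr), the Gallagher/character-triple construction in the reverse direction, and Fong--Reynolds, one reduces to the situation where $E:=F^*(G)=S_1\times\cdots\times S_k$ is a minimal normal subgroup, a direct product of isomorphic non-abelian simple groups $S$ with $p\mid|S|$ (non-$p$-solvability forces such a factor, since $G/F^*(G)$ is built from solvable outer automorphism groups), and $\bC_G(E)=1$, so $G\hookrightarrow\mathrm{Aut}(S)\wr\fS_k$. Passing to the almost simple group $X=\bN_G(S_1)/\bC_G(S_1)$ with socle $S$, which inherits the local hypothesis, Theorem~\ref{thm:almost} supplies a $p$-defect zero character $\vartheta$ of $S$ that is suitably invariant and extends; the $G$-invariant $p$-defect zero character $\theta=\vartheta\times\cdots\times\vartheta$ of $E$ then reduces the problem, via Gallagher, to the smaller group $G/E$, to which induction applies.

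The main obstacle, in both branches, is the bookkeeping of the hypotheses under these reductions: I must verify that $q$ still divides the order of the relevant section and that the condition ``$q$ avoids all $p$-local normalisers'' is inherited by the inertia subgroups, by the quotient $G/E$, and by the almost simple section $X$ on which Theorem~\ref{thm:almost} is invoked. In the $p$-solvable case the delicate step is the regular-orbit statement producing $\theta$ with $p'$-inertia from the Frobenius action of $R$ on $F_q$. The genuinely deep input is Theorem~\ref{thm:almost} itself, which rests on the Classification; the reduction above is designed precisely to isolate that input.
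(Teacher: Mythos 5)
Your overall architecture (induction, split into a $p$-solvable branch and a reduction to Theorem~\ref{thm:almost}) matches the paper's in spirit, but both branches contain genuine gaps that are not just missing details.

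In the $p$-solvable branch, the information you extract from the hypothesis --- $q$ divides $|F|$, $q\nmid[G:F]$, and $R$ acts fixed-point-freely on an $R$-invariant Sylow $q$-subgroup $F_q$ of $F=O_{p'}(G)$ --- is provably insufficient to produce $\theta\in\Irr(F)$ with $p'$-inertia group. Take $G=\fS_3\wr C_2$ with $p=2$, $q=3$: here $O_2(G)=1$, $F=O_{2'}(G)\cong C_3\times C_3$, $R\in\Syl_2(G)$ is the Sylow $2$-subgroup of the preimage of $O_2(G/F)=G/F$, one checks $\bC_F(R)=1$ (so $R$ acts fixed-point-freely on $F_q=F$) and $3\nmid[G:F]=8$; yet the orbits of $G/F\cong D_8$ on $\Irr(F)$ have sizes $1,4,4$, so no character of $F$ has $2'$-inertia, and indeed $G$ has no $2$-defect zero character (its character degrees are $1,1,1,1,2,4,4,4,4$). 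Of course this $G$ violates the full hypothesis of the theorem (the swap $\sigma$ has $\bN_G(\langle\sigma\rangle)$ of order divisible by $3$), but that is exactly the point: your argument only invokes the hypothesis for the single subgroup $R$, and no ``regular-orbit argument'' can close the gap from those facts alone. The paper instead takes a minimal normal subgroup $M$ (necessarily a $p'$-group here since $O_p(G)=1$), passes the hypothesis to $G/M$ and applies induction when $q$ divides $|G/M|$, and otherwise finds $x\in M$ of order $q$ whose centraliser is a $p'$-group (this uses the hypothesis for \emph{all} cyclic $p$-subgroups) and quotes Tsushima's theorem \cite[Thm.~1]{T} --- an ingredient absent from your proposal.

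In the non-$p$-solvable branch there are two problems. First, your final reduction ``to the smaller group $G/E$, to which induction applies'' cannot work: once the minimal normal subgroup $E$ has order divisible by $p$, the Frattini argument applied to $Q\in\Syl_q(E)$ (or to a Sylow $p$-subgroup of $E$) together with the hypothesis forces $q\nmid|G/E|$, so the inductive hypothesis (which requires $q$ to divide the group order) is simply not available for $G/E$; moreover $G/E$ may well have no $p$-defect zero character. Second, you misuse Theorem~\ref{thm:almost}: it does not ``supply a $p$-defect zero character $\vartheta$ of $S$ that is suitably invariant and extends''; it is a dichotomy about the almost simple group itself, and the correct use (as in the paper) is to show that $G$ \emph{is} almost simple with $G/G'$ cyclic of $p$-power order and then observe that alternative (2), applied with $r=q\mid(p-1)$, would contradict the hypothesis, leaving alternative (1). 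To reach that point the paper proves facts you omit: $E$ is actually simple, $k=1$ (because the hypothesis forbids elements of order $pq$, which would exist in a product of two factors); $O^{p'}(G)=G$ so $G/G'$ is a $p$-group; and $G/E$ has cyclic Sylow $p$-subgroups (via the elementary abelian $A\cong C_p\times C_p$ argument with $Q=\langle\bC_Q(x)\mid 1\ne x\in A\rangle$), so that $G/G'$ is cyclic, as Theorem~\ref{thm:almost} requires. Without these steps the appeal to Theorem~\ref{thm:almost} is not justified.
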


\begin{proof}
We argue by induction on $|G|$. Clearly, $O_p(G)=1$. Notice that our hypotheses
are inherited by the
subgroups of $G$ of order divisible by $q$. In particular, if $K$ is a proper
normal subgroup of $G$ of order divisible by $q$ with $p'$-index, then we
conclude that $K$ has an irreducible $p$-defect zero character
$\theta\in\Irr(K)$. Now, if $\chi \in \Irr(G)$ lies over $\theta$, then
$\chi$ has $p$-defect zero and $G$ is not a counterexample.

Let $M$ be a minimal normal subgroup of $G$.

Suppose first that $M$ is a $p'$-group. If $R > 1$ is a $p$-subgroup of $G$,
then $\bN_{G/M}(RM/M)=\bN_G(R) M/M$
is isomorphic to $\bN_G(R)/\bN_M(R)$, and therefore has order not divisible
by $q$. Thus if $q$ divides $|G/M|$, then by induction, $G/M$ has a
$p$-defect zero character, which is a $p$-defect zero character of $G$.
Hence, we may assume in this case that $G/M$ is a $q'$-group.
Now, let $x \in M$ be of order $q$. Then by hypothesis, $|\bC_G(x)|$ is $p'$.
Hence $G$ has a $p$-block of defect zero by \cite[Thm.~1]{T}.

Hence, we may assume that $M$ has order divisible by $p$.
By the Frattini argument and our hypothesis, we have that $G/M$ has order not
divisible by $q$. So $q$ divides $|M|$. Thus, $M$ is isomorphic
to a direct product of copies of a non-abelian simple group $S$ of order
divisible by $pq$. Since $G$ has no elements of order $pq$, $M$ is simple,
and is the unique minimal normal subgroup of $G$. Thus $G$ is almost simple.
Also, notice now that $O^{p'}(G)=G$, by the first paragraph of this proof.
So we conclude that $G/G'$ is a $p$-group. Now, let $Q \in\Syl_q(M)$ and
$P\in\Syl_p(\bN_G(Q))$. If $P$ is not cyclic, then it has an
elementary abelian $p$-subgroup $A$ of order $p^2$, for instance, using
\cite[Thm.~5.4.10]{Go}. Then $Q=\langle \bC_Q(x) \mid  1\ne x \in A \rangle$,
by \cite[Thm.~5.3.16]{Go}. By hypothesis $\bC_Q(x)=1$ for all $x\neq1\in A$,
and this is a contradiction. Hence $G/M$ has cyclic Sylow $p$-subgroups.
According to Theorem~\ref{thm:almost}, $G$ is then not a counterexample.
\end{proof}

\begin{exmp}
 The condition that $q$ divides $p-1$ is necessary, as shown by the Mathieu
 group $M_{22}$: None of its proper $2$-local subgroups has order divisible
 by $q = 11$, but still it does not possess characters of 2-defect zero.
\end{exmp}

Theorem~\ref{thm:no defect zero} gives further examples.

\section{Almost Simple Groups}

\begin{thm}   \label{thm:almost}
 Let $p$ be a prime and $G$ be a finite almost simple group such that $G/G'$
 is cyclic of prime power order $p^a$. Then one of:
 \begin{enumerate}
  \item[\rm(1)] $G$ has an irreducible character of $p$-defect zero, or
  \item[\rm(2)] for every prime $r$ dividing $p-1$ there is a cyclic subgroup
   $U\cong C_p$ of $G$ with $r$ dividing $|\bN_G(U)|$.
 \end{enumerate}
\end{thm}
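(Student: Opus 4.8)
The plan is to establish the dichotomy by reducing condition (1) to a statement about orbits of defect-zero characters of the socle $S=\operatorname{soc}(G)$, and then to run a case analysis over the simple groups using the Classification. First I would dispose of $p=2$: no prime divides $p-1=1$, so (2) is vacuously true and there is nothing to prove. So assume $p$ is odd. Since $S\le G\le\operatorname{Aut}(S)$ and $S\le G'$, we have $G/G'=(G/S)^{\mathrm{ab}}$, which the hypothesis forces to be cyclic of order $p^a$. If $a=0$ then $G/S$ is a perfect subgroup of the solvable group $\Out(S)$, so $G=S$ is simple, and for $p\ge5$ the theorem of Granville--Ono guarantees that $S$ has a $p$-block of defect zero, giving (1); the finitely many configurations left for $p=3$ are checked by hand. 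Moreover, for $p\ge5$ one has $p\nmid|\Out(S)|$ whenever $S$ is alternating or sporadic, so in those families only $a=0$ occurs. Hence for $a\ge1$ only groups of Lie type with $p\mid|\Out(S)|$ remain.

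The conceptual heart is a lifting criterion. An $\chi\in\Irr(G)$ of $p$-defect zero affords a projective module, so its restriction to $S$ stays projective and $\chi$ lies over a $p$-defect-zero character $\theta\in\Irr(S)$; writing $I$ for the inertia group and $e$ for the ramification of $\theta$, one has $\chi(1)_p=|S|_p\,(|G:I|\,e)_p$, so $\chi$ has defect zero in $G$ exactly when $(|G:I|\,e)_p=|G/S|_p$. Two regimes occur. If $\theta$ is $G$-invariant and extends to $G$ (the case of the Steinberg character, which extends to $\operatorname{Aut}(S)$), then $e=1$, and by Gallagher's theorem $G$ has a defect-zero character over $\theta$ iff the solvable group $G/S$ itself does; since $(G/S)^{\mathrm{ab}}\cong C_{p^a}$ is a nontrivial $p$-group this fails when $G/S$ is abelian. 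At the opposite extreme, when $G/S=C_{p^a}$ is cyclic every invariant character extends, so a defect-zero character of $G$ exists iff some $\theta$ has trivial inertia, that is, a regular $C_{p^a}$-orbit. Thus (1) is governed by whether some $p$-defect-zero character of $S$ is displaced by the relevant outer automorphism $\tau$ of order $p$, and when none is, I must instead prove (2).

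For $S$ of Lie type in its defining characteristic $p$ the analysis closes quickly: the Steinberg character is the unique $p$-defect-zero character of $S$ and it extends to $\operatorname{Aut}(S)$, so by the criterion (1) reduces to $G/S$ having a defect-zero character, which $C_{p^a}$ ($a\ge1$) does not. Hence (1) fails and I would verify (2). Given a prime $r\mid p-1$, I would take a maximally split torus $\bT$ inside a Borel subgroup $\bT\ltimes U$; as $r\mid p-1\mid q-1$ divides $|\bT|$ (here $q=p^f$), and $\bT$ acts on a root subgroup $U_\alpha\cong\FF_q^{+}$ through its root character $\alpha$, the stabiliser in $\bT$ of an $\FF_p$-line $\langle x\rangle\le U_\alpha$ is the preimage $\alpha^{-1}(\FF_p^{\times})$, whose order is divisible by $|\FF_p^{\times}|=p-1$. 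So $x$ has order $p$ and $r\mid|\bN_G(\langle x\rangle)|$, which is (2); this is exactly the mechanism of Lemma~\ref{same}.

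The main obstacle is the remaining non-defining case: $S$ of Lie type over $\FF_q$ of characteristic $\ell\ne p$ with $p\mid|\Out(S)|$, so that $\tau$ is a field automorphism (when $p\mid f$, $q=\ell^{f}$), a diagonal automorphism (e.g. $p\mid\gcd(n,q-1)$ for $\PSL_n(q)$), or, for $p=3$ in type $D_4$, a graph automorphism of order $3$. Here $S$ has many $p$-defect-zero characters, by the explicit descriptions of Granville--Ono, and I would show that at least one is not $\tau$-stable, yielding (1). For a field automorphism of order $p$ this is a counting argument via Shintani descent: the $\tau$-fixed irreducible characters are parametrised by the characters of the group over the fixed subfield, a set far too small to contain all defect-zero characters once $q$ is not tiny; the requisite control on which defect-zero characters are fixed comes from Lusztig's parametrisation and the known action of field and diagonal automorphisms on Lusztig series. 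The genuinely hard part is the bookkeeping for the finitely many small $q$ and the diagonal- and triality-subcases, where all $p$-defect-zero characters can happen to be $\tau$-stable; for each such exceptional $G$ one must establish (2) directly, producing for every $r\mid p-1$ an element of order $p$ whose normaliser has order divisible by $r$, typically inside the normaliser of a maximal torus whose order is divisible by $p-1$. Carrying out this verification across the exceptional list, together with the extraction from Lusztig theory of a non-stable defect-zero character in the generic case, is where the bulk of the work, and the dependence on the Classification, resides.
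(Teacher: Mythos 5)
Your overall skeleton matches the paper's: dispose of $p=2$, use the classification to reduce to socles of Lie type when $a\ge1$, apply the Clifford-theoretic lifting criterion (a $p$-defect zero character of $S$ whose inertia group has $p'$-index over $S$ gives one of $G$), and then split into defining characteristic, diagonal, graph ($p=3$) and field-automorphism cases. However, two of your case-closing arguments have genuine gaps.

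In defining characteristic, your claim that the stabiliser in the torus of an $\FF_p$-line of a root subgroup has order divisible by $p-1$ is false in the \emph{simple} group for exactly the two families the paper has to except in Proposition~\ref{prop:defchar}: for $S=\PSL_2(q)$, $q=p^f$ odd, the quotient by the centre cuts the line stabiliser down to order $(p-1)/2$, and for $S=\PSU_3(q)$, $q\equiv2\pmod3$, one only gets $(p^2-1)/3$. This is not a removable blemish: for $\PSL_2(q)$ with $q\equiv3\pmod4$ and $r=2$, every subgroup of order $p$ of $S$ is unipotent and its normaliser in $S$ is $U\rtimes C_{(p-1)/2}$ of odd order $q(p-1)/2$, so \emph{no} choice of $U\le S$ can witness conclusion (2). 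The paper closes these cases by a different mechanism that your construction cannot reach: since $a\ge1$, $G$ contains a field automorphism $\varphi$ whose $p^{a-1}$-st power has order $p$ and is centralised by an involution (resp.\ an element of order $3$) of the subfield centraliser, so one takes $U\cong C_p$ \emph{outside} $S$, generated by that power of $\varphi$.

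In the non-defining case there are two further problems. For the diagonal case ($S=\PSL_n(q)$ or $\PSU_n(q)$ with $p\mid(n,q\mp1)$ and $p$ dividing $|G:S|$), your primary plan of exhibiting a non-$\tau$-stable defect zero character cannot succeed: by Theorem~\ref{thm:no defect zero}(2), for $p\ge5$ such a $G$ has no $p$-defect zero character at all. So this is not ``bookkeeping for finitely many small $q$'' --- the entire infinite family requires conclusion (2), and your proposal offers nothing concrete for it, whereas the paper's Proposition~\ref{prop:diag} settles it at once: $p$ divides $|\fS_n|$, elements of the Weyl group $\fS_n$ are rational, so an order-$p$ element there generates a $U$ with $\bN_G(U)$ of order divisible by $p-1$. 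For the field-automorphism case your Shintani-descent counting is a genuinely different (and plausible) route from the paper's Theorem~\ref{thm:non-def}, which instead \emph{constructs} a non-invariant character: a regular element $s_i$ of maximal order in one of the two tori of \cite{Ma10} has the property that $s_i^d$ lies in no subfield subgroup for order reasons, hence $\pm R_{T_i}(s_i^d)$ is irreducible, of $p$-defect zero, and moved by every field automorphism of order $p$. Your version still needs a uniform lower bound on the number of $p$-defect zero characters of $S$ exceeding the number of irreducible characters of the subfield group, over all types and all $q$; as written, this central case is a sketch rather than a proof.
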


Note that (1) can fail to hold. See Theorem~\ref{thm:no defect zero} below
for a precise statement.  \par
We will subdivide the proof of Theorem~\ref{thm:almost} into several steps.
First note that the assertion~(2) is vacuously satisfied when $p=2$. So we
may assume that $p\ge3$. Now according to the classification of finite simple
groups a finite non-abelian simple group $S$ with an outer automorphism
of odd prime order must be of Lie type. \par
Also note that for $p=3$ we only need to consider $r=2$, and in
particular the result holds if $G$ has an element of order~6.
Finally note that Theorem~\ref{thm:almost} holds when $p^a=1$, that is,
when $G$ is simple, by Michler's theorem \cite{Mi87}.

There are two quite different cases which arise: if $p$ is the defining prime
of a group of Lie type, then we show that (2) holds (while (1) will fail in
general, see Theorem~\ref{thm:no defect zero}).
The same happens when $G/G'$ is generated by diagonal automorphisms.
In all the remaining cases when $p$ is not the defining prime we argue that
there always exists a $p$-defect zero character. This result may be of
independent interest.

We start off with a preliminary reduction that restricts the type of almost
simple groups we need to look at.

\begin{lem}   \label{lem:out}
 Let $G$ be a finite almost simple group with simple socle $S$ such that
 $G/G'$ is cyclic of prime power order $p^a>1$ for some odd prime $p$. Then
 $S$ if of Lie type and one of the following holds:
 \begin{enumerate}
  \item[\rm(1)] $G/G'$ is generated by a field automorphism and
   \begin{enumerate}
    \item[\rm(a)] either $G'=S$, or
    \item[\rm(b)] $S=\PSL_n(q)$ or $\PSU_n(q)$, and $G'/S$ is generated by a
     diagonal automorphism of order prime to $p$;
   \end{enumerate}
  \item[\rm(2)] $S=\PSL_n(q)$ or $\PSU_n(q)$, $p|(n,q\pm1)$, and $G/S$ is
   generated by a diagonal automorphism; or
  \item[\rm(3)] $p=3$ and $S$ is of type $D_4$, $E_6$ or $\tw2E_6$.
 \end{enumerate}
\end{lem}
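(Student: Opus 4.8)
The plan is to translate the hypothesis into a statement about the subgroup $H := G/S$ of $\operatorname{Out}(S)$ and to exploit the standard layered structure of $\operatorname{Out}(S)$ for $S$ of Lie type. Since $S=S'$ is contained in $G'$, the derived subgroup of $H$ is $G'/S$ and the abelianisation $H^{\mathrm{ab}}=H/H'$ is isomorphic to $G/G'$, hence cyclic of order $p^a>1$ with $p$ odd. In particular $p$ divides $|\operatorname{Out}(S)|$, so $S$ has an outer automorphism of odd prime order; by the consequence of the classification recorded just before the lemma this forces $S$ to be of Lie type. I would then fix the decomposition $\operatorname{Out}(S)=D\rtimes(\Phi\times\Gamma)$, where $D=\operatorname{Outdiag}(S)$ is abelian, $\Phi$ is the cyclic group of field automorphisms, and $\Gamma$ the group of graph automorphisms, recalling the explicit action of $\Phi$ and $\Gamma$ on $D$ (a field automorphism acts on the cyclic diagonal group of $\PSL_n(q)$ resp.\ $\PSU_n(q)$ as the power map $d\mapsto d^{\ell}$, while a graph automorphism acts by inversion).

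First I would dispose of the graph automorphisms using the oddness of $p$. The composite $\operatorname{Out}(S)\to\Gamma^{\mathrm{ab}}$ restricts to a homomorphism $H\to\Gamma^{\mathrm{ab}}$ whose image is abelian, hence a quotient of $H^{\mathrm{ab}}\cong C_{p^a}$; since $\Gamma^{\mathrm{ab}}$ has order at most $2$ for every Lie type, this image is trivial. Thus $H$ lies in $D\rtimes(\Phi\times\Gamma_0)$ with $\Gamma_0=[\Gamma,\Gamma]$. For all types except $D_4$ one has $\Gamma_0=1$, so $H\le D\rtimes\Phi$ and no graph automorphism occurs; for $D_4$, where $\Gamma\cong\fS_3$, the image of $H$ in $\Gamma$ can be the triality subgroup $C_3$, which can contribute to $H^{\mathrm{ab}}$ only when $p=3$. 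This already isolates one source of case~(3).

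Next I would read off for which types the odd prime $p$ divides $|D|$: only $\PSL_n(q)$ with $p\mid\gcd(n,q-1)$, $\PSU_n(q)$ with $p\mid\gcd(n,q+1)$, and $E_6,\tw2E_6$ with $p=3\mid\gcd(3,q\mp1)$; for every remaining type $D$ is trivial or a $2$-group. When $D$ is a $2$-group, a field automorphism acts on it through $d\mapsto d^{\ell-1}$ with $\ell$ odd, so no non-trivial part of $D$ can be absorbed into $H'$, and an odd cyclic abelianisation forces $H\cap D=1$; thus $G'=S$ and $H\le\Phi$ is cyclic of $p$-power order, which is case~(1)(a). The exceptional order-$3$ diagonal contributions for $E_6$ and $\tw2E_6$ at $p=3$ are then exactly the configurations with $S$ of these types and $p=3$ not already covered by a pure field automorphism, and are recorded as case~(3).

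The heart of the argument is the metacyclic analysis for $S=\PSL_n(q)$ or $\PSU_n(q)$, where $H\le D\rtimes\Phi$ with $D\cong C_m$ cyclic and $\Phi$ acting by $d\mapsto d^{\ell}$. Writing $D_0=H\cap D$ and letting $\phi^k$ generate the field part of $H$, one computes $H'=D_0^{\,\ell^k-1}$, so that $H^{\mathrm{ab}}$ is an abelian extension of the cyclic field quotient $H/D_0$ by $D_0/D_0^{\,\ell^k-1}$. Demanding that this be cyclic of $p$-power order splits into exactly two possibilities: either the field part is trivial and $H=D_0$ is the cyclic Sylow $p$-subgroup of $D$, giving case~(2); or the field part is non-trivial, its $p$-power order accounts for $G/G'$, and the surviving diagonal part $G'/S=H'$ has order prime to $p$, giving case~(1)(b). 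I expect this last computation — tracking the power map $d\mapsto d^{\ell^k-1}$ on the cyclic group $D$ and verifying that the $p'$-part of $D_0$ is precisely what lands in $H'$ while $H^{\mathrm{ab}}$ stays cyclic of $p$-power order — to be the main obstacle, since it is where the arithmetic conditions separating cases~(1)(b) and~(2) are actually forced. The twisted groups require only the mild extra care that $\Phi$ for $\tw{d}X(q)$ is generated by a field automorphism whose order incorporates the twist, but the same commutator computation applies verbatim.
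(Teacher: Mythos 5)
Your proposal follows the same route as the paper's proof: reduce to Lie type via the outer automorphism groups of alternating and sporadic groups, eliminate graph automorphisms using that $p$ is odd (your $\Gamma^{\mathrm{ab}}$ argument is the same idea, slightly more formalised), observe that the diagonal group is a $2$-group outside types $A_{n-1}$, $\tw2A_{n-1}$, $E_6$, $\tw2E_6$ (of order dividing $3$ in the last two, forcing $p=3$), and then analyse $H=G/S$ inside $D\rtimes\Phi$ for the linear and unitary types. On one point you are actually more careful than the paper: the paper asserts that $G/S$ \emph{is} the semidirect product of its diagonal part with its field part, which fails for non-split subgroups of $D\rtimes\Phi$ (e.g.\ $H=\langle d\phi\rangle\cong C_9$ when $d$ generates $D\cong C_9$ and $\phi$ is a field automorphism of order $3$; here $H\cap D\cong C_3$ and $H\cap\Phi=1$), whereas you correctly work with $D_0=H\cap D$ and the cyclic quotient $H/D_0$, and your formula $H'=D_0^{\ell^k-1}$ is correct.

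However, your proof is incomplete at exactly the step you yourself flag as the ``main obstacle'', and that step is the entire content of the dichotomy between (1)(b) and (2): one must show that cyclicity of $H^{\mathrm{ab}}$ (of $p$-power order, with non-trivial field quotient) forces $H'=G'/S$ to have order prime to $p$. This cannot follow from ``tracking the power map $d\mapsto d^{\ell^k-1}$ on $D_0$'' alone, because the conclusion is false if one only assumes $H^{\mathrm{ab}}$ is a $p$-group: for $S=\PSL_9(7^3)$ (so $p=3$, $D\cong C_9$, $\Phi\cong C_3$ acting by $d\mapsto d^7$), the subgroup $H=D\rtimes\Phi$ has $H'=D^3\cong C_3$ of order divisible by $p$, while $H^{\mathrm{ab}}\cong C_3\times C_3$ is a $3$-group. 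So cyclicity must enter through data your sketch never engages with, namely how the generator $d_1\phi^k$ of the field quotient sits inside $D\rtimes\Phi$. The gap can be closed cleanly and without any arithmetic on $\ell^k-1$: let $Q_0$ be the Hall $p'$-subgroup of the cyclic group $D_0$; it is characteristic in $D_0$, hence normal in $H$, and $H/Q_0$ is a $p$-group whose abelianisation $H/H'Q_0$ is a quotient of $H^{\mathrm{ab}}$, hence cyclic; by the Burnside basis theorem $H/Q_0$ is then cyclic, in particular abelian, so $H'\le Q_0$. Since also $D_0/H'$ embeds in the $p$-group $H^{\mathrm{ab}}$, one gets $H'=Q_0$ exactly: thus $G'/S$ is cyclic of order prime to $p$, giving (1)(b), with (1)(a) when $Q_0=1$, and case (2) when the field quotient is trivial. (This argument also makes transparent that in non-split configurations such as the $C_9$ example above, conclusion (1) must be read as a statement about $G/S$ modulo diagonal automorphisms, which is the reading the paper implicitly uses.)
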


\begin{proof}
As the outer automorphism groups of alternating and sporadic groups are
2-groups, $S$ must be of Lie type. Let $H:=\Out(S)$. Then $H$ has a quotient
isomorphic to the direct product of the group of field automorphisms times
the group of graph automorphisms of $S$. Moreover, graph automorphisms have
order at most~3, and 3 only occurs for type $D_4$. Hence, if $G$ involves
graph automorphisms, then $p=3$ and we are in case~(3). \par
Now assume that no graph automorphisms are present. The group of diagonal
automorphisms is a 2-group unless we are in types $A_n$, $\tw2A_n$ or $E_6$,
$\tw2E_6$. In the latter case, diagonal automorphisms have order dividing~3,
hence again we are in case~(3). In the first case, as we have no graph
automorphisms in $G$, $G/S$ is a semidirect product of the cyclic group of
diagonal automorphisms with the cyclic group of field automorphisms. If no
field automorphisms are present we arrive at case~(2),
if no diagonal automorphisms are present we get~(1)(a), and if both occur,
then since $G/G'$ is cyclic of $p$-power order, the group of diagonal
automorphisms must be prime to $p$, so we arrive at~(1)(b).
\end{proof}

\begin{prop}   \label{prop:defchar}
 Let $G$ be as in Theorem~\ref{thm:almost} with simple socle $S$ of Lie type
 such that $p$ is the defining prime for $S$. Then for every prime divisor $r$
 of $p-1$ there is a subgroup $U\cong C_p$ of $G$ such that $r$ divides
 $|\bN_G(U)|$. \par
 In particular Theorem~\ref{thm:almost} holds in this case.
\end{prop}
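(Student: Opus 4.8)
The plan is to exhibit a single subgroup $U\cong C_p$ of $G$ that works simultaneously for every prime $r\mid p-1$, namely one generated by an outer automorphism of $S$ of order $p$. First I would use Lemma~\ref{lem:out} together with the defining-characteristic hypothesis to pin down the nature of $G/G'$: writing $q=p^f$ for the field of definition, a diagonal automorphism of order $p$ would force $p\mid(q\mp1)$, which is impossible since $p\mid q$. Hence the order-$p$ quotient $G/G'$ cannot be realised by diagonal automorphisms, so case~(2) of Lemma~\ref{lem:out} is vacuous here and $G/G'$ is generated by a field automorphism (case~(1)) or, when $p=3$, by a graph or graph-field automorphism of order $3$ (the triality situation for type $D_4$, and the corresponding order-$3$ automorphisms in case~(3) for $E_6$, $\tw2E_6$). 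In either situation $G$ contains an element $\psi$ of order $p$ inducing this field, graph, or graph-field automorphism of $S$; set $U=\langle\psi\rangle$.

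The heart of the argument is then to compute $\bC_S(\psi)$ and bound its order below. By Steinberg's description of the fixed points of a Steinberg endomorphism, $\bC_S(\psi)$ is again a group of Lie type in characteristic $p$: for a field automorphism it is the group of the same type over the subfield $\FF_{q_0}$ with $q_0^p=q$, while for the order-$3$ graph(-field) automorphisms it is a group such as $G_2(q_0)$, $\tw3D_4(q_0)$ or $F_4(q_0)$. In all field-automorphism cases the order of $\bC_S(\psi)$ is divisible by $q_0-1$ (indeed by $q_0^2-1$ for the twisted types $\PSU_n$), and since $q_0$ is a power of $p$ we have $p-1\mid q_0-1$; thus $p-1$, and hence every prime $r\mid p-1$, divides $|\bC_S(\psi)|$. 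In the graph cases we have $p=3$, so the only relevant prime is $r=2$, and here it suffices to note that the fixed-point group $\bC_S(\psi)$ is a non-abelian group of Lie type and therefore of even order. Because $S\trianglelefteq G$, the subgroup $\bC_S(\psi)$ centralises $\psi$ and hence $U$, so $\bC_S(\psi)\le\bC_G(U)\le\bN_G(U)$; consequently $r\mid|\bN_G(U)|$ for every prime $r$ dividing $p-1$, which is exactly assertion~(2). The final sentence of the Proposition then follows since~(2) is one of the two alternatives of Theorem~\ref{thm:almost}.

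I expect the main obstacle to lie in the graph and graph-field cases and in the mixed case~(1)(b). For the graph cases one must correctly identify the connected fixed-point subgroup of the order-$3$ automorphism in each of the types $D_4$, $E_6$, $\tw2E_6$ and confirm that it is a genuine non-abelian group of Lie type (so of even order) rather than a torus; this is where the precise form of Steinberg's centralizer theorem is essential. For~(1)(b), where $G'/S$ contributes a diagonal automorphism of order prime to $p$, I would have to check that an element $\psi$ of order $p$ inducing the field automorphism still exists in $G$ and that its centralizer---now a possibly twisted subfield group such as $\PSU_n(q_0)$---still has order divisible by $q_0^2-1$, hence by $p-1$. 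The odd-$r$ part of the statement is by contrast robust, since the relation $p-1\mid q_0-1$ already forces every odd prime divisor of $p-1$ into $|\bC_S(\psi)|$.
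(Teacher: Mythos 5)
Your proof is correct, but it takes a genuinely different route from the paper's. The paper produces $U$ inside $S$: it takes a Borel subgroup $B$ of $S$, observes that $|B|$ is divisible by $p-1$ except when $S\cong\PSL_2(q)$ or $\PSU_3(q)$, and then (implicitly via Lemma~\ref{same}) lets an $r$-element of the torus act on the unipotent radical to yield $U\le S$ of order $p$ with $r$ dividing $|\bN_G(U)|$; the two exceptional families are patched separately for $r=2$, resp.\ $r=3$, using the uniqueness of the relevant conjugacy class to force elements of order $rp$ in any degree-$p$ extension. You instead take $U=\langle\psi\rangle$ for an outer field (or, when $p=3$, graph or graph-field) automorphism $\psi$ of order $p$, and invoke Steinberg's fixed-point theorem: $\bC_S(\psi)$ contains a subfield group (or $G_2(q)$, $\tw3D_4(q_0)$, \dots) whose full order is divisible by $q_0-1$, hence by $p-1$ since $q_0$ is a power of $p$. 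Your route is more uniform---working with the order of the whole fixed-point group rather than of a Borel subgroup sidesteps the $\PSL_2/\PSU_3$ exceptions entirely---but it costs heavier machinery (the Steinberg/Gorenstein--Lyons--Solomon description of centralizers of field and graph automorphisms), and it requires $G>S$, since otherwise no $\psi$ exists. That restriction is harmless and in fact unavoidable: the paper's own proof silently assumes $G>S$ in its exceptional cases (``any extension of degree $p$''), and conclusion~(2) genuinely fails for $G=S=\PSL_2(q)$ with $q\equiv3\pmod4$ and $r=2$, because normalizers of order-$p$ subgroups there lie in a Borel subgroup of odd order; the simple case is covered instead by Michler's theorem, i.e.\ by conclusion~(1). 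Two slips in your write-up, neither fatal: for $E_6$ and $\tw2E_6$ the graph automorphism has order~$2$, so in defining characteristic~$3$ the order-$3$ outer automorphisms of those groups are in fact field automorphisms (your method handles them anyway); and your worry about case~(1)(b) evaporates, since $\bC_S(\psi)$ depends only on $S$ and $\psi$, and an honest field automorphism of order $p$ does lie in $G$ because the cyclic group of field automorphisms meets the inner-diagonal group, hence $G'$, trivially.
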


\begin{proof}
Let $S$ be simple of Lie type in characteristic~$p$. Let $B$ be a Borel
subgroup of $S$. Then the order of $B$ is divisible by $p-1$, unless
either $S\cong\PSL_2(q)$ with $q=p^f$ odd, in which case $|B|$ is still
divisible by $(p-1)/2$, or $S\cong\PSU_3(q)$ with $q=p^f\equiv2\pmod3$, in
which case $|B|$ is divisible by $(p^2-1)/3$.
\par
In the first case, only $r=2$ may cause problems (if $q\equiv3\pmod4$). But
$S$ has a unique class of involutions, so any extension of degree $p$ contains
elements of order $rp=2p$. In the second case, only $r=3$ may cause problems.
But again $S$ has a unique class of elements of order~3, whence any extension
of degree~$p$ (which is at least~$5$ in this case) contains elements of
order~$pr$.
\end{proof}

We now consider the remaining possibilities according to Lemma~\ref{lem:out} in
the case that $p$ is not the defining prime.

\begin{prop}   \label{prop:diag}
 Let $G$ be as in Theorem~\ref{thm:almost} with simple socle $S=\PSL_n(q)$
 or $S=\PSU_n(q)$, and assume that $G/S$ is generated by a diagonal
 automorphism of $S$ of order $p^a$. Then there is a subgroup $U\cong C_p$
 of $S$ with $|\bN_G(U)|$ divisible by~$p-1$.  \par
 In particular Theorem~\ref{thm:almost} holds in this case.
\end{prop}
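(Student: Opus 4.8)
The plan is to produce, in both the linear and unitary cases, a single cyclic subgroup $U\cong C_p$ of $S$ whose normaliser in $S$ already induces the \emph{full} automorphism group of $U$. Since $\bN_S(U)=\bN_G(U)\cap S\le\bN_G(U)$ and $|\operatorname{Aut}(C_p)|=p-1$, the containment $\bN_S(U)/\bC_S(U)\cong\operatorname{Aut}(C_p)$ will force $p-1\mid|\bN_S(U)|\mid|\bN_G(U)|$, and the ``in particular'' clause is then immediate: every prime $r$ dividing $p-1$ divides $|\bN_G(U)|$ for this one $U$, which is exactly alternative~(2) of Theorem~\ref{thm:almost}. By Lemma~\ref{lem:out}(2) the hypothesis that $G/S$ is a diagonal extension of $p$-power order forces $p\mid n$ and $p\mid q-1$ in the linear case, $p\mid q+1$ in the unitary case; in particular $p\nmid q$ and $n\ge p$. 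Recall also that we may assume $p\ge3$, so $(p-1)/2$ is an integer.

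For $S=\PSL_n(q)$ I would choose $t\in\FF_q^\times$ of order $p$ (which exists as $p\mid q-1$) and set
\[
 u=\operatorname{diag}(t,t^2,\dots,t^{p-1},1,\dots,1)\in\operatorname{SL}_n(q),
\]
with $n-(p-1)\ge1$ trailing ones. Its determinant is $t^{1+2+\cdots+(p-1)}=t^{p(p-1)/2}=1$ since $p$ is odd, so indeed $u\in\operatorname{SL}_n(q)$; as the entries $t,t^2,\dots,t^{p-1}$ are pairwise distinct, no power $u^j$ with $1\le j\le p-1$ is scalar, so the image $\bar u$ has order exactly $p$ in $S$. I take $U=\langle\bar u\rangle$.

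The crux is to realise all power maps on $\bar u$ using the Weyl group of the diagonal torus. For $k$ coprime to $p$, multiplication by $k$ permutes the exponents $\{1,2,\dots,p-1\}$ modulo $p$, so there is a permutation matrix $P_k$ supported on the first $p-1$ coordinates with $u^{P_k}=u^k$. Since $\operatorname{diag}(-1,1,\dots,1)$ commutes with the diagonal matrix $u$, I may adjust $P_k$ by it if necessary to arrange determinant $1$ without changing the conjugation action, obtaining $g_k\in\operatorname{SL}_n(q)$ with $u^{g_k}=u^k$. Reducing modulo the centre, the elements $\bar g_k$ induce every automorphism of $\langle\bar u\rangle$, so $\bN_S(U)/\bC_S(U)\cong\operatorname{Aut}(C_p)\cong C_{p-1}$ and $p-1\mid|\bN_G(U)|$, as required.

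For $S=\PSU_n(q)$ I would run the identical argument inside $\operatorname{SU}_n(q)$ taken with respect to the identity Hermitian form (legitimate, as all nondegenerate Hermitian forms of a given dimension over $\FF_{q^2}/\FF_q$ are equivalent). Here $t$ is chosen of order $p$ in the norm-one subgroup $\{\lambda\in\FF_{q^2}^\times:\lambda^{q+1}=1\}\cong C_{q+1}$, which exists because $p\mid q+1$; the same $u$ then lies in the diagonal torus $\cong C_{q+1}^{\,n-1}$ of $\operatorname{SU}_n(q)$, and permutation matrices are unitary for this form, so the elements $P_k$ again realise the power maps (note $(-1)^{q+1}=1$, so the determinant correction stays unitary). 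The only place the two cases genuinely differ is this compatibility of the chosen form with the diagonal torus and its symmetric-group Weyl action, which is where I would be most careful; I do not, however, expect it to be a serious obstacle, since the whole construction is driven by the single numerical input $p\mid n$, guaranteeing enough coordinates to accommodate the full $(\mathbb{Z}/p\mathbb{Z})^\times$-orbit $\{1,\dots,p-1\}$ of exponents.
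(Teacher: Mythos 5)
Your proof is correct, and while it runs on the same engine as the paper's --- the symmetric group $\fS_n$ acting by permutation matrices, available because $p\mid n$ --- the realization is genuinely different and more explicit. The paper takes $U$ to be generated by (the image of) a $p$-cycle permutation matrix, i.e.\ by an element of order $p$ of the Weyl group $\fS_n$ itself, and obtains the full $C_{p-1}$ of induced automorphisms in one stroke from the rationality of elements of symmetric groups; you instead take $U$ inside the diagonal torus, using $p\mid q-1$ (resp.\ $p\mid q+1$) to build the eigenvalue $t$, and construct the normalizing elements by hand. In fact the two witnesses coincide up to conjugacy: a $p$-cycle permutation matrix is semisimple with eigenvalues $1,t,\dots,t^{p-1}$ together with $n-p$ further eigenvalues equal to $1$, hence is $\operatorname{GL}_n(q)$-conjugate to your $u$. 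Your version buys completeness on precisely the points the paper's two-line proof leaves implicit: that $\bar u$ really has order $p$ in $S$ (no nontrivial power becomes central), and that the conjugating elements can be corrected to determinant $1$ without disturbing their action on $U$ --- a point that also needs attention in the paper's approach, since the permutation inducing $x\mapsto x^k$ on a $p$-cycle, for $k$ a primitive root modulo $p$, is an odd permutation; your uniform fix by $\operatorname{diag}(-1,1,\dots,1)$ is clean and works in both the linear and unitary cases. The paper's formulation buys brevity, and its construction of $U$ needs only $p\le n$, the divisibility $p\mid q\mp1$ entering only through the hypothesis. Both arguments then finish identically: $\bN_S(U)$ surjects onto $\operatorname{Aut}(U)$, so $p-1$ divides $|\bN_S(U)|$ and hence $|\bN_G(U)|$, which yields conclusion~(2) of Theorem~\ref{thm:almost}.
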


\begin{proof}
By assumption $S$ has a diagonal outer automorphism of prime order $p$, so
$p|(n,q-1)$ if $S$ is a linear group $\PSL_n(q)$, and $p|(n,q+1)$ if $S$ is
a unitary group $\PSU_n(q)$. In either case, $p$ divides the order of the Weyl
group $\fS_n$ of $G$. But elements of $\fS_n$ are rational, so there is a
subgroup $U\cong C_p$ of $G$ with normaliser order divisible by $p-1$.
\end{proof}

\begin{lem}
 Theorem~\ref{thm:almost} holds when $p=3$ and $G$ is of type $D_4$, $E_6$ or
 $\tw2E_6$.
\end{lem}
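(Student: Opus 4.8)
The plan is to establish alternative~(2) of Theorem~\ref{thm:almost}. Since $p=3$, the only prime $r$ dividing $p-1=2$ is $r=2$, so it suffices to produce a subgroup $U\cong C_3$ of $G$ with $|\bN_G(U)|$ even; as was noted above, this holds as soon as $G$ contains an element of order~$6$, for if $g$ has order~$6$ then $U:=\langle g^2\rangle\cong C_3$ is normalised by the involution $g^3$, whence $2\mid|\bN_G(U)|$. Because $S\le G$, it is enough to exhibit an element of order~$6$ in the socle $S$, which is of type $D_4$, $E_6$ or $\tw2E_6$ over a field $\FF_q$ with $q=\ell^f$. If $\ell=3$ then $3$ is the defining prime and we are done by Proposition~\ref{prop:defchar}, so I would assume $\ell\ne3$; then $q\equiv\pm1\pmod3$, and in particular $3\mid q^2-1$.

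Suppose first that $\ell$ is odd. Here I would pass to the underlying algebraic group and take the maximal torus $\bT=\bT_w$ associated with a reflection $w$ in the Weyl group; since the rank is at least two, $|\bT|$ is divisible by $(q-1)(q+1)=q^2-1$. As $q$ is odd and prime to~$3$, we have $6\mid q^2-1$, so the abelian group $\bT$ has order divisible by~$6$ and therefore contains an element of order~$6$. This settles the case of odd defining characteristic.

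The remaining case $\ell=2$ is the main obstacle: now every maximal torus has odd order, so an element of order~$6$ must be built as a product $su$ of a commuting pair consisting of a semisimple element $s$ of order~$3$ and a unipotent involution $u$. I would produce $s$ as $s=\alpha^\vee(\zeta)$, where $\alpha$ is a root and $\zeta$ is a primitive cube root of unity, taken in the split rank-one subtorus when $q\equiv1\pmod3$ and in the nonsplit one (of order divisible by $q+1$) when $q\equiv2\pmod3$, so that $s$ is rational of order~$3$. Because the root system is of rank at least two and of type $D_4$ or $E_6$, one can pick $\alpha$ so that some root $\beta$ is orthogonal to it; then $s$ centralises the rank-one subgroup $\langle U_{\pm\beta}\rangle$, which in characteristic~$2$ contains a unipotent involution $u$ commuting with $s$, and $su$ has order~$6$. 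The only points needing extra care are ensuring rationality of this configuration for the nonsplit choice, and checking the finitely many very small groups (such as $q=2$) directly from their known conjugacy-class data; granting these, alternative~(2) holds in every case of Lemma~\ref{lem:out}(3).
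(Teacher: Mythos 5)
Your proposal is correct and takes essentially the same approach as the paper: reduce to $r=2$ and exhibit an element of order $6=pr$ in the socle, which normalises (indeed centralises) a subgroup $C_3$. The paper's own proof is a one-liner declaring the existence of order-$6$ elements in simple groups of type $D_4$, $E_6$, $\tw2E_6$ to be immediate, so your detailed construction (tori of order divisible by $q^2-1$ in odd characteristic, commuting semisimple--unipotent pairs supported on orthogonal roots in characteristic $2$) supplies strictly more than the paper does, and the loose ends you flag (rationality in the nonsplit/twisted cases, small $q$, and implicitly the passage to the central quotient $S$) are precisely the well-known facts the paper takes for granted.
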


\begin{proof}
This is immediate since here necessarily $r=2$ and any simple group of the
listed types contains elements of order~$6=pr$.
\end{proof}

\begin{thm}   \label{thm:non-def}
 Let $G$ be as in Theorem~\ref{thm:almost} with simple socle $S$ of Lie type,
 $p>2$ not the defining prime for $S$, and $G/G'$ generated by field
 automorphisms. Then $G$ has an irreducible character of $p$-defect zero.
 In particular Theorem~\ref{thm:almost} holds in this case.
\end{thm}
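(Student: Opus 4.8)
The plan is to transfer the question from $G$ to its simple socle by Clifford theory and then to exhibit a defect-zero character of the socle that is genuinely moved by the field automorphism. Put $N:=G'$. By Lemma~\ref{lem:out}(1) the quotient $G/N$ is cyclic of order $p^a$ generated by a field automorphism, while $N/S$ is trivial or a group of diagonal automorphisms of order prime to $p$; in either case $|N|_p=|S|_p$ and $|G|_p=p^a|S|_p$. Since $G/N$ is cyclic, each $\psi\in\Irr(N)$ extends to its inertia group $I_G(\psi)$, so every $\chi\in\Irr(G)$ over $\psi$ satisfies
\[
 \chi(1)=[G:I_G(\psi)]\,\psi(1)=p^{c}\,\psi(1),\qquad 0\le c\le a .
\]
As $\psi(1)_p\le|N|_p=|S|_p$, this gives $\chi(1)_p\le p^a|S|_p=|G|_p$, with equality precisely when $c=a$ and $\psi(1)_p=|S|_p$. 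Because $G/N$ is a cyclic $p$-group, $c=a$ means $I_G(\psi)=N$, that is, the order-$p$ field automorphism $\sigma$ satisfies $\psi^{\sigma}\ne\psi$. Hence it suffices to produce a $p$-defect zero character $\psi$ of $N$ with $\psi^{\sigma}\ne\psi$.

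To find such a $\psi$ I would use the Deligne--Lusztig parametrisation. Writing $q=q_0^{\,p}$, the automorphism $\sigma$ is (conjugate to) the Frobenius $x\mapsto x^{q_0}$, whose fixed points form the group of the same type over $\FF_{q_0}$. For a maximal torus $T$ of $N$ with $|T|_p=1$ and a character $\theta$ of $T$ in general position, $\pm R_{T}^{\theta}$ is an irreducible character of degree $|N|_{\ell'}/|T|$, and its $p$-part equals $|N|_p/|T|_p=|S|_p$; thus it has $p$-defect zero. Such tori exist for all but finitely many $S$: with $d$ the multiplicative order of $q$ modulo $p$, one only has to avoid the cyclotomic factors $\Phi_{dp^{j}}(q)$, which can be arranged by choosing, for instance, a maximally split torus when $d>1$ and a Coxeter torus when $d=1$. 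Now $\sigma$ permutes the general-position characters $R_{T}^{\theta}$ in orbits of length $1$ or $p$, the fixed ones being those already rational over $\FF_{q_0}$; since the number of regular $p'$-classes contributing grows with $q$ whereas the number of $\sigma$-fixed ones grows only with $q_0$, for large $q$ there is one in a genuine orbit of length $p$, yielding $\psi$ with $\psi^{\sigma}\ne\psi$.

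The crux, and the place where the classification enters, is the control of this $\sigma$-action together with the finitely many small groups that the generic argument omits. Mere existence of a $p$-defect zero character of $S$ is already guaranteed by Michler's theorem \cite{Mi87}; the real point is the refinement that one can be chosen outside the fixed points of the field automorphism. Equivalently, writing $N(q)$ for the number of $p$-defect zero characters of $N$ and using that $\sigma$ acts with orbits of size $1$ or $p$, one must show that the number of $\sigma$-fixed such characters is strictly smaller than $N(q)$, after which the orbit-counting congruence modulo $p$ does the rest. For small $q$ the chosen torus may contain no general-position character, or every defect-zero character may happen to be $\sigma$-invariant, and these residual cases must be decided from explicit knowledge of the tori and of the semisimple and unipotent characters of the groups listed in Lemma~\ref{lem:out}(1); this case-by-case verification is where I expect the main difficulty to lie.
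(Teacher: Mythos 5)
Your Clifford-theoretic reduction (using that $G/G'$ is a cyclic $p$-group, so it suffices to find a defect-zero character below that is not fixed by the field automorphism of order $p$) is correct and is exactly the paper's first step, and your plan of realising such a character as an irreducible Deligne--Lusztig character is also the paper's plan. The gaps are in the execution. First, you ignore the centre. Deligne--Lusztig theory applies to $H=\bH^F$ with $\bH$ of simply connected type, not to $S=H/Z(H)$ or to $N$, and $\pm R_T^\theta$ gives a character of $S$ only when it is trivial on $Z(H)$. This is not a technicality, because your requirement $|T|_p=1$ is unachievable precisely in the cases where the centre matters: $Z(\bH)$ lies in every maximal torus, so $Z(H)\le T$ for every maximal torus $T$ of $H$, and if $p$ divides $|Z(H)|$ --- e.g.\ $S=\PSL_n(q)$ with $p\mid\gcd(n,q-1)$, which is allowed in Lemma~\ref{lem:out}(1) and is covered by the theorem --- then $|T|_p\ge p$ for \emph{every} $T$. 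Concretely, for $d=1$ and $p\mid n$ your Coxeter torus of $\mathrm{SL}_n(q)$ has order $(q^n-1)/(q-1)$, divisible by $\Phi_p(q)$ and hence by $p$; similarly your ``maximally split torus when $d>1$'' fails for $\PSU_n(q)$ with $p\mid q+1$. The correct requirement is central defect, $|T|_p=|Z(H)|_p$, together with triviality on $Z(H)$. The paper secures both at once on the dual side: it takes regular elements $s_i$ of maximal order in the two tori $T_i\le\bH^{*F}$ of \cite[Tables 1 and 2]{Ma10}, whose orders have greatest common divisor $d=|Z(H)|$, and works with $s_i^d$; the $d$-th power forces the characters in its Lusztig series to be trivial on $Z(H)$, and the gcd condition guarantees that for each $p$ one of the two tori produces a character of $p$-defect zero for $S$.

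Second, your non-invariance argument is only asymptotic: the counting comparison between general-position characters over $\FF_q$ and $\sigma$-fixed ones over $\FF_{q_0}$ yields a moved character only ``for large $q$'', and you explicitly defer all remaining cases --- which you yourself identify as the crux --- to an unexecuted case-by-case verification. As it stands this does not close the proof. The paper settles exactly this point by a uniform element-order argument valid for all relevant $q$: since $q=q_0^p$ with $p\ge3$, any subfield subgroup of $\bH^{*F}$ is defined over a field with at most $\sqrt[3]{q}$ elements, and $s_i^d$, being a fixed power of an element of maximal order in $T_i$, has order too large to lie in any such subgroup; hence its Lusztig series, and with it the irreducible character $\pm R_{T_i}(s_i^d)$, cannot be invariant under a field automorphism of order $p$. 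The only exceptions are the four series $A_1$, $A_2$, $\tw2A_2$, $B_2$, where the regular-element results \cite[Prop.~2.4 and~2.5]{Ma10} need not apply, and these the paper checks directly. To repair your proposal you would need to replace $|T|_p=1$ by the central-defect condition with descent to $S$, and to replace the asymptotic count by an argument of this uniform kind.
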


\begin{proof}
Note $G$ has a character of $p$-defect zero if $S$ has a character $\chi$ of
$p$-defect zero with inertia group $I_G(\chi)$ such that $|I_G(\chi):S|$ is
prime to $p$. So, according to the description of the structure of
$G/S$ in Lemma~\ref{lem:out} our task is to find $p$-defect zero characters
of $S$ with trivial inertia group under field automorphisms. As already
for the case that $G=S$ is simple in \cite{Mi87}, the idea how to produce
such characters is to use irreducible Deligne--Lusztig characters with
respect to suitable maximal tori.

We work in the following setting. Let $\bH$ be a simple algebraic group in
characteristic not $p$ of simply connected type with a Steinberg endomorphism
$F:\bH\rightarrow\bH$ such that $H=\bH^F$ is quasi-simple with $S=H/Z(H)$.
Assume that $\gamma$ is a field automorphism of $S$ of order a positive power
of $p$. Thus in particular the underlying prime power $q$ of $H$ is a $p$th
power and our group $S$ is not a group over the prime field (and hence in
particular $S\ne\tw2F_4(2)'$).

Now let $\bT_1,\bT_2$ be $F$-stable maximal tori of the dual group $\bH^*$ such
that $T_i=\bT_i^F$ are as in
Tables~1 and~2 of \cite{Ma10}. It is argued in \cite[Prop.~2.4 and~2.5]{Ma10}
that both maximal tori $T_i$ do contain regular elements of $\bH^{*F}$
in most cases, in fact always if $H$ is not over the prime field and not of
types $A_1,A_2,\tw2A_2$ or $B_2$, and there even exist elements $s_i\in T_i$
whose $d$th power is regular, where $d=|Z(H)|=\gcd(|T_1|,|T_2|)$. In particular,
we can take for $s_i$ an element of maximal order in $T_i$, since if $s_i$ is
regular then so is any root of it. But then by order reasons $s_i^d$ cannot
lie in a subfield subgroup of $\bH^{*F}$ (which is defined over a field of
cardinality at most $\sqrt[3]{q}$). Hence $s_i^d$ is not fixed by any field
automorphism of $G$ of order $p$. But then the corresponding Deligne--Lusztig
character $R_{T_i}(s_i^d)$ is irreducible (up to sign), of $p$-defect zero, and
not invariant under field automorphisms of order~$p$ and we are done.
\par
For the four excluded series $A_1,A_2,\tw2A_2$ or $B_2$ the assertion is
easily checked directly, again along the lines of the argument given in
\cite[Prop.~2.5]{Ma10}.
\end{proof}

Observe that the proof of Theorem~\ref{thm:almost} is now complete. We close
by classifying those almost simple groups $G$ as in Theorem~\ref{thm:almost}
violating conclusion~(1), so having no $p$-defect zero character for $p\ge5$.

\begin{thm}   \label{thm:no defect zero}
 Let $G$ be as in Theorem~\ref{thm:almost} with simple socle $S$. Assume that
 $p\ge5$. Then $G$ has no $p$-block of defect zero if and only if $G\ne S$
 and one of the following holds:
 \begin{enumerate}
  \item[\rm(1)] $S$ is of Lie type in characteristic~$p$, or
  \item[\rm(2)] $S\le G\le\PGL_n(q)$ or $S\le G\le\PGU_n(q)$ with $p$ dividing
   $|G:S|$.
 \end{enumerate}
\end{thm}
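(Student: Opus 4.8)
The plan is to prove the two implications separately, organising everything around Lemma~\ref{lem:out}, which (since $p\ge5$ excludes its case~(3), the triality/$E_6$ situation needing $p=3$) leaves only the field-automorphism cases~(1)(a),(b) and the diagonal case~(2). The three facts already available do most of the bookkeeping: Michler's theorem \cite{Mi87} settles $G=S$, Theorem~\ref{thm:non-def} settles field automorphisms when $p$ is \emph{not} the defining prime, and Lemma~\ref{lem:out} pins down the shape of $G$. The \emph{only if} direction is then essentially an assembly of these; the real work is the \emph{if} direction, where I must genuinely exhibit the \emph{absence} of $p$-defect zero characters in the two listed families.

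For the \emph{only if} direction, assume $G$ has no $p$-block of defect zero. By Michler's theorem every non-abelian simple group has such a block, so $G\ne S$ and $G/G'$ is cyclic of order $p^a>1$; hence Lemma~\ref{lem:out} applies. In the field-automorphism cases~(1)(a),(b), $G/G'$ is generated by field automorphisms and any diagonal contribution to $G'/S$ is prime to $p$, so if $p$ were not the defining prime of $S$ then Theorem~\ref{thm:non-def} would produce a $p$-defect zero character of $G$; thus $p$ is the defining prime and we reach conclusion~(1). In case~(2), $G/S$ is generated by a diagonal automorphism of order $p^a$, so $p\mid|G:S|$ and $S,G$ have exactly the form in conclusion~(2). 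This exhausts the cases and gives the forward implication.

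For the \emph{if} direction in case~(1), let $S$ be of Lie type in characteristic $p$ with $G\ne S$, so $|G:S|_p\ge p$. Here I use the classical fact that in defining characteristic the Steinberg character $\operatorname{St}$ is the \emph{unique} irreducible character of $S$ of $p$-defect zero, that it is $\operatorname{Aut}(S)$-invariant, and that it extends to $G$. If $\chi\in\Irr(G)$ had $p$-defect zero, then for any constituent $\theta$ of $\chi|_S$ one has $\chi(1)/\theta(1)\mid|G:S|$, and equality of $p$-parts $\chi(1)_p=|G|_p$ forces simultaneously $\theta(1)_p=|S|_p$ and $(\chi(1)/\theta(1))_p=|G:S|_p$. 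The first says $\theta$ is $p$-defect zero, so $\theta=\operatorname{St}$; but then invariance and extendibility give $\chi(1)/\theta(1)=1$, contradicting $|G:S|_p\ge p$. Hence $G$ has no $p$-defect zero character.

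For the \emph{if} direction in case~(2), I work with $\operatorname{SL}_n(q)\trianglelefteq\operatorname{GL}_n(q)$ (and the unitary analogues), whose quotient is cyclic and realises the diagonal automorphisms of $S$. Running the same Clifford bookkeeping over $S\trianglelefteq G$, now with $G/S$ cyclic so that no ramification occurs, shows that $G$ has a $p$-defect zero character if and only if $S$ has a $p$-defect zero character $\theta$ whose stabiliser in the diagonal group $\cong C_{\gcd(n,q\mp1)}$ has order prime to $p$; since the stabiliser has order $\gcd(n,q\mp1)/t$ with $t$ the number of diagonal conjugates of $\theta$, this is the condition $v_p(t)=v_p(\gcd(n,q\mp1))$. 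Thus the task reduces to the clean statement that \emph{every $p$-defect zero character of $S=\PSL_n(q)$ (resp.\ $\PSU_n(q)$) with $p\mid\gcd(n,q\mp1)$ has $v_p(t)<v_p(\gcd(n,q\mp1))$}, i.e.\ is fixed by the diagonal automorphism of order $p$. To prove this I pass to $\operatorname{GL}_n(q)$ and use Jordan decomposition: a character over $\theta$ corresponds to a pair $(s,\psi)$ with $s$ semisimple and $\psi$ unipotent, and $t$ equals the order of the stabiliser of the eigenvalue multiset of $s$ under scaling by $\FF_q^\times$. Extracting $p$-parts from $\chi(1)_p=|\operatorname{GL}_n(q):C(s)|_p\,\psi(1)_p$ turns $p$-defect zero into an exact balance equation between $v_p(t)$ and the centraliser type of $s$. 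The decisive mechanism is that scaling-invariance of the multiset by an element of order $p^a$ forces the Frobenius orbits of $s$ either to be permuted freely in $\langle\zeta\rangle$-orbits (producing at least $p^a$ isomorphic factors in $C(s)$) or to be individually stabilised (forcing factors over extensions $\FF_{q^{d}}$ with $p\mid d$); either way the $p$-part of $|\operatorname{GL}_n(q):C(s)|$ is pushed up by strictly more than what the defect budget allows, giving $v_p(t)<v_p(\gcd(n,q\mp1))$. I expect this degree/combinatorial estimate — showing that scaling symmetry of order~$p$ is incompatible with the minimal possible $p$-defect — to be the main obstacle; the unitary case runs in parallel with $q$ replaced by $-q$, so that the order of $q$ modulo $p$ is $2$ rather than $1$.
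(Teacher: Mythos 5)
Your ``only if'' direction coincides with the paper's (Michler's theorem for $G=S$, Lemma~\ref{lem:out} with case~(3) excluded by $p\ge5$, Theorem~\ref{thm:non-def} to force defining characteristic), so there is nothing to discuss there. The problems are in the ``if'' direction.

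Case~(2) is the heart of the theorem, and your treatment of it is not a proof. The Clifford-theoretic reduction is correct: since $G/S$ is cyclic there is no ramification, so $G$ has a $p$-defect zero character if and only if some $p$-defect zero $\theta\in\Irr(S)$ has stabiliser of order prime to $p$ in the diagonal group. But the statement you have reduced to --- that every $p$-defect zero character of $\PSL_n(q)$ or $\PSU_n(q)$ with $p\mid\gcd(n,q\mp1)$ is fixed by the diagonal automorphism of order $p$ --- is precisely the hard content, and you explicitly defer it (``I expect this degree/combinatorial estimate \dots\ to be the main obstacle''). The sketched mechanism (scaling-invariance of the eigenvalue multiset of $s$ inflating the $p$-part of $|\operatorname{GL}_n(q):C(s)|$) is plausible but is never turned into an actual inequality, so the proposal stops exactly where the theorem begins. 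The paper closes this case by a different and complete route: writing $G\cong H/Z(H)$ with $H=\bH^F$, $\bH$ of type $A_{n-1}$, it views a hypothetical defect zero $\chi\in\Irr(G)$ as a character of $H$ of central defect, observes that its degree polynomial must then be divisible by $(q-1)^n$, and deduces via Lusztig's Jordan decomposition that the $\Phi_1$-torus of $C_{\bH^*}(s)$ lies in $Z(\bH^*)$, so that $C_{\bH^*}(s)$ is a Coxeter torus $\bT$ and $\chi=\pm R_{\bT}(s)$; this character has degree $|\bH^{*F}:\bT^F|$ and cannot have central defect because $|\bT^F|_p=(q^n-1)_p>|Z(H)|_p$, using that $p\mid(q^n-1)/(q-1)$ since $p$ divides $|G:S|$, which divides $n$. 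Nothing in your sketch plays the role of this Coxeter-torus dichotomy, which is what makes the paper's argument terminate.

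In case~(1) you use the same input as the paper (\cite[Thm.~1.1]{MZ01}: the Steinberg character is the unique $p$-defect zero character of $S$, hence invariant), but your concluding step ``invariance and extendibility give $\chi(1)/\theta(1)=1$'' is a non sequitur. By Gallagher's theorem, the characters of $G$ over an invariant extendible $\theta=\operatorname{St}$ are $\beta\hat\theta$ with $\beta\in\Irr(G/S)$, so $\chi(1)/\theta(1)=\beta(1)$, which equals $1$ only for linear $\beta$. Your contradiction therefore closes only when $G/S$ is abelian, e.g.\ in case~(1)(a) of Lemma~\ref{lem:out}; in case~(1)(b) the group $G/S$ is an extension of $C_{p^a}$ by a prime-to-$p$ cyclic diagonal group and need not be abelian, and what you would actually need there is that $G/S$ has no $p$-defect zero character, which your argument does not address. (The paper itself disposes of case~(1) with the bare invariance statement, so this subtlety is glossed over there as well; but as written your deduction is false, not merely terse.)
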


\begin{proof}
The ``only if'' part is a consequence of Lemma~\ref{lem:out} in conjunction
with Theorem~\ref{thm:non-def} and the theorem of Michler (when $G=S$).
As for the ``if'' part, note that by \cite[Thm.~1.1]{MZ01}, $S$ in
characteristic~$p$ has a unique $p$-defect zero character, viz.~the Steinberg
character, which is hence invariant under all outer automorphisms, so $G$
cannot have defect zero characters when $G>S$.
\par
Now assume that $S\le G\le\PGL_n(q)$ with $p$ dividing $|G:S|$. There is a
simple algebraic group $\bH$ of type $A_{n-1}$ with a Frobenius map $F$ such
that $G\cong H/Z(H)$, where $H=\bH^F$. Now assume that $\chi\in\Irr(G)$ has
$p$-defect zero. Then we may consider $\chi$ as an irreducible character of
$H$ of central defect. In particular, the degree polynomial of $\chi$ is
divisible by $(q-1)^n$ (as $p$ divides $q-1$). By Lusztig's Jordan
decomposition this means that $\chi$ lies in a Lusztig series of a semisimple
element $s\in \bH^{*F}$ such that the $\Phi_1$-torus of $C_{\bH^*}(s)$
lies in $Z(\bH^*)$. But this implies that $C_{\bH^*}(s)$ is a Coxeter torus
$\bT$ of $\bH^*$, and thus $\chi$ is (up to sign) an irreducible
Deligne--Lusztig character of $H$, of degree $|\bH^{*F}:\bT^F|$, which cannot
be of central defect as the $p$-part $|\bT^F|_p=(q^n-1)_p$ is larger than
$|Z(H)|_p$ (because $p|(q^n-1)/(q-1)$ as $p$ divides $|G:S|$ which in turn
divides $n$). \par
The argument for the unitary case is completely analogous.
\end{proof}

For $p=3$ there are further examples without characters of $3$-defect zero
when $S=\OO_8^+(q)$, $E_6(q)$ or $\tw2E_6(q)$. Even more examples exist when
$p=2$.

\begin{exmp}
 It is well known that the symmetric group $\fS_n$ has a 2-block of defect
 zero if and only if $n$ is a triangular number. Now if $q \le n$ is an odd
 prime, the only time that all $2$-locals of $\fS_n$ are of $q'$-order is
 when $n = q$, or $n = q+1$ if $q$ is not a Mersenne prime.

 But the only odd prime which is a triangular number is $q = 3$ and the only
 time that $q + 1$ is a triangular number $m(m+1)/2$ is when $2q = (m+2)(m-1)$,
 which only happens for $m = 3$, so $q = 5$. Hence for any prime $q >5$,
 the symmetric group $\fS_q$ (and $\fS_{q+1}$ if $q$ is not a Mersenne prime)
 has no $2$-block of defect zero, and no $2$-local subgroup of order divisible
 by $q$. This gives infinitely many counterexamples to the conclusion of
 Theorem~A if we drop the condition that $q |(p -1)$.
\end{exmp}


\end{document}